\newtheorem{lemma}{Lemma}
\newtheorem{proposition}{Proposition}
\newtheorem{remark}{Remark}
\newtheorem{definition}{Definition}
\newcommand{\R}{\mathbb{R}}
\newcommand{\Id}{\mathrm{Id}}
\begin{document}

\title[Optimal support recovery with multi-penalty regularization]{Conditions on optimal support recovery in unmixing problems by means of multi-penalty regularization}

\author{Markus Grasmair}
\address{Department of Mathematical Sciences, Norwegian University of Science and Technology, 7491 Trondheim}
\email{markus.grasmair@math.ntnu.no}

\author{Valeriya Naumova}
\address{Simula Research Laboratory AS, Martin Linges vei 25, 1364 Fornebu, Oslo}
\email{valeriya@simula.no}

\maketitle

\begin{abstract}
Inspired by several real-life applications in audio processing and medical image analysis, where the quantity of interest is generated by several sources to be accurately modeled and separated, as well as by recent advances in  regularization theory and optimization, we study the conditions on optimal support recovery in inverse problems of unmixing type by means of multi-penalty regularization.  

We consider and analyze a regularization functional composed of a data-fidelity term, where signal and noise are additively mixed, a non-smooth, convex, sparsity promoting term, and a quadratic penalty term to model the noise. We prove not only that  the well-established theory for sparse recovery in the single parameter case can be translated to the multi-penalty settings, but  we also demonstrate the enhanced properties of multi-penalty regularization in terms of support identification compared to sole $\ell^1$-minimization.  We additionally confirm and support the theoretical results by extensive numerical simulations, which give a statistics of robustness of the multi-penalty regularization scheme with respect to the single-parameter counterpart. 
Eventually, we confirm a significant improvement in performance compared to standard $\ell^1$-regularization
for  compressive sensing problems considered in our experiments. 
\end{abstract}

\section{Introduction}

In many real-life applications such as audio processing or medical image analysis, one encounters the situation when given observations (most likely noisy) have been generated by several sources $u_i$ that one wishes to reconstruct separately.  
In this case, the reconstruction problem can be understood as an inverse problem of unmixing type, where the solution $u^\dag$ consists of several (two or more) components of different nature, which have to be identified and separated.  

In mathematical terms, an unmixing problem can be stated as the solution of an equation
\[
	A u^\dag = y,
\]
 where  $u^\dag = \sum_{i=1}^L u_i, u_i \in V_i$ and $V_1+\ldots+V_L = \R^N$ but $\langle V_i, V_j \rangle \simeq 0$ for $i \neq j$ in the sense that 
\begin{equation}
	\label{UoS}
		| \langle v_i, v_j \rangle | \simeq \delta_{ij} 
\end{equation}
for all $v_i \in V_i$ and $v_j \in V_j,$ $\| v_i\|_{\ell_2} = \| v_j \|_{\ell_2} = 1, i \neq j.$ 
In general, we are interested to acquire the minimal amount of information on $u$ so that we can selectively reconstruct with the best accuracy one of the components $u_{\hat \imath}$, but not necessarily also the other components $u_j$ for $j \neq \hat \imath$. In this settings, we further assume that $A$ cannot be specifically tuned to recover $u_{\hat \imath}$ but should be suited to gain universal information to recover $u_{\hat \imath}$ by a specifically tuned decoder. 

A concrete example of this setting is the \emph{noise folding phenomenon} arising in compressed sensing, related to noise in the signal that is eventually amplified by the measurement procedure. In this setting, it is reasonable to consider a model problem of the type
\begin{equation}
	\label{model_problem}
		A (u+v) = y,
\end{equation}
where $v$ is the random Gaussian noise with variance $\sigma_v$ on the original signal $u \in \R^N$ and $A \in \R^{m \times N}$ is the linear measurement matrix. Several recent works (see, for instance, \cite{Arias-castro11yeldar} and the references therein) illustrate how the measurement process actually causes the noise folding phenomenon. To be more specific, one can show that  (\ref{model_problem}) is equivalent to solving 
\begin{equation}
	\label{model_problem_eq}
		\hat A u+ \omega = y,
\end{equation}
where $\omega$ is composed by i.i.d.\ Gaussian entries with distribution $\mathbf N(0, \sigma_\omega)$, and 
the variance $\sigma_\omega$ is related to the variance of the original signal
by $\sigma_\omega^2 = \frac{N}{m} \sigma_v^2$.
This implies that the  variance of the noise on the original signal is amplified by a factor of $N/m$.

Under the assumption that $A$ satisfies the so-called restricted isometry property,
it is known from the work on the Dantzig selector in \cite{CandesTao} that one can reconstruct $u^\dagger$
from measurements $y$ as in (\ref{model_problem_eq}) such that
\begin{equation}
	\label{recon_rate}
	 \| u - u^\dagger \|_2^2 \leq C^2  2\Bigl((1+k) \frac{N}{m} \sigma^2_v\Bigr)\log N ,
\end{equation}
where $k$ denotes the number of nonzero elements of the solution $u.$ The estimate (\ref{recon_rate}) is considered (folklore) nearly-optimal in the sense that no other method can really improve the asymptotic error $\mathcal O(\frac{N}{m}\sigma_v^2).$ Therefore, the noise folding phenomenon may in practice significantly reduce the potential advantages of compressed sensing in terms of the trade-off between robustness and efficient compression (given by the factor $N/m$ here) compared to other more traditional subsampling methods \cite{6204356}.

In \cite{arfopeXX} the authors present a two-step numerical method which allows not only to recover the large entries of the original signal $u$ accurately, but also has enhanced properties in terms of support identification over simple $\ell^1$-minimization based algorithms. In particular, because of the lack of separation between noise and reconstructed signal components, the latter ones can easily fail to recover the support when the support is not given a priori. However, the computational cost of the second phase of the procedure presented in  \cite{arfopeXX}, being a non-smooth and non-convex optimization problem, is too demanding to be performed on problems with realistic dimensionalities. 
It was also shown that other methods based on a different penalization of the signal and noise can lead to higher support detection rate.  

The follow up work \cite{naumpeter}, which addresses the noise folding scenario  by means of multi-penalty regularization, provides the first numerical evidence of the superior performance of multi-penalty regularization compared to its single parameter counterparts for problem (\ref{model_problem}). In particular, the authors consider the functional
\begin{equation}
	\label{MP}
J_{p,q}(u,v) : = \| A(u+v)-y \|^2_{2} +  \alpha \| u\|_{p}^{p} + \left (\beta  \| v\|_{q}^{q} + \varepsilon \| v\|_{2}^2 \right ),
\end{equation}
here  $\alpha, \beta, \varepsilon \in \R_+$ may all be considered as regularization parameters of the problem. 
The parameter $\varepsilon>0 $ ensures the $\ell_2-$coercivity of $J_{p,q}(u,\cdot)$ also with respect to the component $v$. In the infinite dimensional setting the authors presented a numerical approach to the minimization of (\ref{MP}) for $0 \leq p <2, 2 \leq q< \infty$, based on simple iterative thresholding steps, and analyzed its convergence.

The results presented in this paper are very much inspired not only by the above-mentioned works in the signal processing and compressed sensing fields, but also by  theoretical developments in sparsity-based regularization  (see \cite{CPA:CPA20350} and references therein) and multi-penalty regularization (\cite{LP_NumMath, NP13}, just to mention a few). 
While the latter two directions are considered separately in most of the literature, there have also been some efforts to understand regularization and convergence behavior for multiple parameters and functionals, especially for image analysis \cite{BH14, SL13}.
However, to the best of our knowledge, the present paper is the first one providing a theoretical analysis  of the multi-penalty regularization with a non-smooth sparsity promoting regularization term, and an explicit comparison  with the single-parameter counterpart.
 
 \subsection{Content of the paper}
In Section 2 we concisely recall the pertinent features and concepts of multi-penalty and single-penalty regularization. We further show that $\ell^1$-regularization can be considered as the limiting case of the multi-penalty one, and thus the theory of $\ell^1$-regularization can be applied to multi-penalty setting. 
In Section 3 we recall and discuss conditions for exact  support recovery in the single-parameter case. 
The main contributions of the paper are presented in Sections 4 and 5, where we extend and generalize the results from the previous sections to the multi-penalty setting. 
In Section 5 we also open the discussion on the set of admissible parameters for the exact support recovery for unmixing problem in single-parameter as well as multi-penalty cases. In particular, we  study the sensitivity of the multi-penalty scheme with respect to the parameter choice. The theoretical findings and discussion are illustrated and  supported by extensive numerical validation tests presented in Section 6. Finally, in Section 7 we compare the performance  of the multi-penalty regularization and its single-parameter counterpart for  compressive sensing problems.

\section{Multi-Penalty and Single-Penalty Regularization}

We first provide a short reminder and collect some definitions of the standard notation used in this paper.   

The true solution $u^\dagger$ of the unmixing problem~\eqref{model_problem}
is called $k$-sparse if it has at most $k$ non-zero entries, 
i.e., $\#I = \# \mathop{{\rm supp}}(u^\dagger) \leq k$, 
where $I := \mathop{{\rm supp}}(u^\dagger) := \bigl\{i: u^\dagger_i \neq 0\}$ denotes the support of $u^\dagger$.

We propose to solve the
unmixing problem~\eqref{model_problem} with $k$-sparse true solution
using multi-penalty Tikhonov regularization
of the form
\begin{equation}\label{eq:multi}
T_{\alpha,\beta}(u,v) := \frac{1}{2}\|A(u+v)-y\|_2^2 + \alpha\|u\|_1 + \frac{\beta}{2}\|v\|_2^2 \to \min_{u,v},
\end{equation}
the solution of which we will denote by $(u_{\alpha,\beta},v_{\alpha,\beta})$.
We note that we can, formally, interpret standard $\ell^1$-regularization
as the limiting case $\beta = \infty$, setting
\[
T_{\alpha,\infty}(u,v) := 
\begin{cases}
  \frac{1}{2}\|Au-y\|_2^2 + \alpha\|u\|_1 & \text{if } v = 0,\\
  +\infty & \text{if } v\neq 0.
\end{cases}
\]
Obviously, the pair of minimizers of $T_{\alpha,\infty}$ will always
be equal to $(u_\alpha,0)$, where $u_\alpha$
minimizes $\frac{1}{2}\|Au-y\|_2^2+\alpha\|u\|_1$.

\begin{definition}
  Let $\beta \in \R_+\cup\{\infty\}$ be fixed.
  We say that a set $\mathcal{S} \subset \R^N \times \R^N$ is a
  \emph{set of exact support recovery for the unmixing
    problem with operator $A$}, if there exists $\alpha > 0$,
  such that $\mathop{{\rm supp}}(u_{\alpha,\beta}) = \mathop{{\rm
      supp}}(u^\dagger)$
  whenever the given data $y$ has the form $y = A(u^\dagger+v)$
  with $(u^\dagger,v) \in \mathcal{S}$.

  The parameters $\alpha > 0$ for which this property holds
  are called \emph{admissible for $\mathcal{S}$}.
\end{definition}

Specifically, we will study for $c>d>0$ the sets 
\begin{equation}\label{eq:ScdI}
\mathcal{S}_{c,d,I} := \bigl\{(u,v) \in \R^N \times \R^N :
\mathop{{\rm supp}}(u) = I,\
\inf_{i\in I} |u_i| > c,\
\|v\|_\infty < d\bigr\}
\end{equation}
and the corresponding class 
\[
\mathcal{S}_{c,d,k} := \bigcup_{\#I \le k} \mathcal{S}_{c,d,I}.
\]
The set $\mathcal{S}_{c,d,I}$ is a set of exact support
recovery, if there exists some regularization parameter $\alpha > 0$,
such that we can apply multi-penalty regularization
with parameters $\alpha$ and $\beta$ (or single-parameter $\ell^1$-regularization
with parameter $\alpha$ in case $\beta = \infty$)
to the unmixing problem~\eqref{model_problem} and obtain a result
with the correct support, provided that the $\ell^\infty$-norm of
the noise is smaller than $d$ and the non-zero coefficients
of $u^\dagger$ are larger than $c$.
Typical examples of real-life signals that can be modeled by signals from the set $\mathcal{S}_{c,d,I}$ can be found in Asteroseismology, see for instance \cite{arfopeXX}.

We note that this class of signals is very similar to the one studied
in~\cite{arfopeXX}. The main difference is that we focus on the case
where the noise $v$ is bounded only componentwise (that is, with respect
to the $\ell^\infty$-norm), whereas~\cite{arfopeXX} deals with
noise that has a bounded $\ell^p$-norm for some $1 \le p \le 2$.
Additionally, we allow the noise also to mix with the signal $u^\dagger$
to be identified in the sense that the supports of $v$ and $u^\dagger$
may have a non-empty intersection.
In contrast, the signal and the noise are assumed to be strictly
separated in~\cite{arfopeXX}.
\medskip

Throughout the paper we will several times refer to the
sign function $\mathop{{\rm sgn}}$, which we always
interpret as being the set valued function $\mathop{{\rm sgn}}(t) = 1$ if $t > 0$,
$\mathop{{\rm sgn}}(t) = -1$ if $t < 0$, and
$\mathop{{\rm sgn}}(t) = [-1,1]$ if $t = 0$, applied componentwise
to the entries of the vector $u_\alpha$.

We use the notation $A_I$ to denote the restriction of the operator $A$
to the span of the support of $u^\dagger$.
Additionally, we denote by
\[
J := \bigl\{i: u^\dagger_i = 0\}
\]
the complement of $I$, and by $A_J$ the restriction
of $A$ to the span of $J$.
We note that the adjoints $A_I^*$ and $A_J^*$ are simply
the compositions of the adjoint $A^*$ of $A$ with the projections
onto the spans of $I$ and $J$, respectively.
\medskip

As a first result, we show that the solution of 
the multi-penalty problem~\eqref{eq:multi} simultaneously
solves a related single-penalty problem.

\begin{lemma}\label{le:single}
  The pair $(u_{\alpha,\beta},v_{\alpha,\beta})$ solves~(\ref{eq:multi})
  if and only if
  \[
  v_{\alpha,\beta} = (\beta + A^*A)^{-1}(A^* y - A^*A u_{\alpha,\beta})
  \]
  and $u_{\alpha,\beta}$ solves the optimization problem
  \begin{equation}\label{eq:minu}
  \frac{1}{2}\|B_\beta u - y_\beta\|_2^2 + \alpha\|u\|_1 \to \min
  \end{equation}
  with
  \[
  B_\beta = \Bigl(\Id + \frac{AA^*}{\beta}\Bigr)^{-1/2}A
  \]
  and
  \[
  y_\beta = \Bigl(\Id+\frac{AA^*}{\beta}\Bigr)^{-1/2}y.
  \]
\end{lemma}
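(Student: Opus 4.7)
The plan is to use partial minimization in the variable $v$. Since $T_{\alpha,\beta}$ is jointly convex, lower semicontinuous and coercive, and for each fixed $u$ strictly convex and smooth in $v$, the pair $(u_{\alpha,\beta},v_{\alpha,\beta})$ minimizes $T_{\alpha,\beta}$ if and only if (a) $v_{\alpha,\beta}$ is the unique minimizer of the inner quadratic problem $\min_v T_{\alpha,\beta}(u_{\alpha,\beta},v)$, and (b) $u_{\alpha,\beta}$ minimizes the reduced (profile) functional $u \mapsto \min_v T_{\alpha,\beta}(u,v)$. The ``if'' direction is automatic from the chain of inequalities $T(u_{\alpha,\beta},v_{\alpha,\beta}) = \text{profile}(u_{\alpha,\beta}) \le \text{profile}(u) \le T(u,v)$, so the entire lemma reduces to performing the inner minimization in closed form and identifying the profile.

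For the inner step I would write the first-order condition $A^*(A(u+v)-y) + \beta v = 0$, rearrange it to $(\beta\Id + A^*A) v = A^* y - A^* A u$, and invert $(\beta\Id + A^*A)$ (which is positive definite for $\beta > 0$). This gives exactly the stated formula for $v_{\alpha,\beta}$. For the profile, I would substitute this expression back into the data-fidelity and quadratic penalty terms (the $\ell^1$-term is independent of $v$ and carries through unchanged). Setting $r := y - Au$, the crucial algebraic ingredient is the push-through identity $A(\beta\Id + A^*A)^{-1} = (\beta\Id + AA^*)^{-1}A$, which follows instantly from $A(\beta\Id + A^*A) = (\beta\Id + AA^*)A$. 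Applying it twice yields
\[
A(u+v_{\alpha,\beta}) - y = -\beta(\beta\Id + AA^*)^{-1} r, \qquad \|v_{\alpha,\beta}\|_2^2 = r^{*}(\beta\Id + AA^*)^{-2} AA^{*} r.
\]

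Adding these two contributions with the weights $\tfrac12$ and $\tfrac{\beta}{2}$, the inner factor becomes $\beta^2\Id + \beta AA^* = \beta(\beta\Id + AA^*)$, which cancels one power of $(\beta\Id + AA^*)^{-1}$ and leaves
\[
\tfrac{1}{2}\|A(u+v_{\alpha,\beta})-y\|_2^2 + \tfrac{\beta}{2}\|v_{\alpha,\beta}\|_2^2 = \tfrac{\beta}{2}\, r^{*}(\beta\Id + AA^*)^{-1} r = \tfrac{1}{2}\bigl\|(\Id + AA^*/\beta)^{-1/2}(y-Au)\bigr\|_2^2,
\]
which is exactly $\tfrac12\|B_\beta u - y_\beta\|_2^2$. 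Thus the profile functional coincides with~(\ref{eq:minu}). The main obstacle, and the only non-mechanical step, is this algebraic collapse of two $\beta$-dependent quadratics into a single weighted norm; once the push-through identity is in hand it is short, and no subdifferential analysis of the $\ell^1$-term is needed since the reduction takes place entirely on the smooth part of $T_{\alpha,\beta}$.
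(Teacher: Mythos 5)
Your proposal is correct and follows essentially the same route as the paper: partial minimization in $v$ via the normal equation $(\beta\Id+A^*A)v = A^*(y-Au)$, followed by identification of the profile functional using the push-through identity $A(\beta\Id+A^*A)^{-1} = (\beta\Id+AA^*)^{-1}A$. The only difference is cosmetic: the paper uses the optimality condition for $v$ to cancel the $\tfrac{\beta}{2}\|v\|_2^2$ term before substituting, whereas you evaluate both quadratics explicitly and combine them, arriving at the same expression $\tfrac{1}{2}\langle(\Id+AA^*/\beta)^{-1}(Au-y),Au-y\rangle$.
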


\begin{proof}
  We can solve the optimization problem in~\eqref{eq:multi} in two steps, first
  with respect to $v$ and then with respect to $u$.
  Assuming that $u$ is fixed, the optimality condition for $v$ in~\eqref{eq:multi} reads
  \begin{equation}\label{eq:vu}
  A^*(A(u+v)-y) + \beta v = 0.
  \end{equation}
  That is, for fixed $u$, the optimum in~\eqref{eq:multi} with respect to
  $v$ is obtained at
  \[
  v(u) := (\beta + A^*A)^{-1}(A^* y - A^*Au).
  \]
  Inserting this into the Tikhonov functional, we obtain the optimization
  problem
  \[
  \frac{1}{2}\|A(u+v(u))-y\|_2^2 + \alpha\|u\|_1 +
  \frac{\beta}{2}\|v(u)\|_2^2 \to \min_u.
  \]
  Using~\eqref{eq:vu}, we can write
  \[
  \begin{aligned}
  \frac{1}{2}\|A(u+v(u))-y\|_2^2
  &= \frac{1}{2}\langle A(u+v(u))-y,Au-y\rangle\\
  &\qquad\qquad{}+ \frac{1}{2}\langle A^*(A(u+v(u))-y),v(u)\rangle\\
  &= \frac{1}{2}\langle A(u+v(u))-y,Au-y\rangle
  - \frac{\beta}{2}\|v(u)\|_2^2.
  \end{aligned}
  \]
  Thus the optimization problem for $u$ simplifies to
  \begin{equation}\label{eq:optu1}
  \frac{1}{2}\langle A(u+v(u))-y,Au-y\rangle
  + \alpha\|u\|_1 \to \min_u.
  \end{equation}
  Now note that
  \[
  \begin{aligned}
    A(u+v(u))-y
    &= A(\Id-(\beta+A^*A)^{-1}A^*A)u - (\Id-A(\beta+A^*A)^{-1}A^*)y\\
    &= A(\Id+A^*A/\beta)^{-1}u - (\Id+AA^*/\beta)^{-1}y\\
    &= (\Id+AA^*/\beta)^{-1}(Au-y).
  \end{aligned}
  \]
  Inserting this equality in~\eqref{eq:optu1}, we obtain the
  optimization problem
  \[
  \frac{1}{2}\bigl\langle
  (\Id+AA^*/\beta)^{-1}(Au-y),Au-y\bigr\rangle + \alpha\|u\|_1
  \to \min_u,
  \]
  which is the same as~\eqref{eq:minu}.
\end{proof}

\begin{remark}
  As a consequence of Lemma~\ref{le:single}, we can apply
  the theory of $\ell^1$-regularization also to the multi-penalty
  setting we consider here. In particular, this yields, for fixed 
  $\beta > 0$, estimates of the form
  \[
  \|u^\dagger-u_{\alpha,\beta}\|_1 \le C_{1,\beta} \alpha + 
  C_{2,\beta} \frac{\|y_\beta-B_\beta u^\dagger\|_2^2}{\alpha}
  \]
  provided that $u^\dagger$ satisfies a source condition of the form 
  $B_\beta^*\eta \in \partial(\|u^\dagger\|_1)$ 
  with $|(B_\beta^*\eta)_i| < 1$ for every $i\not\in\mathop{{\rm supp}}(u^\dagger)$, 
  and the restriction of the mapping $B_\beta$ to the span of the
  support of $u^\dagger$ is injective
  (see~\cite{CPA:CPA20350}).
  Additionally, it is easy to show that these conditions hold
  for $B_\beta$ provided that they hold for $A$ and $\beta$ is
  sufficiently large.
  
\end{remark}

\section{Sets of Exact Support Recovery---Single-penalty Setting}

The main focus of this paper is the question whether multi-penalty
regularization allows for the exact recovery of the support
of the true solution $u^\dagger$ and how it compares to single-penalty
regularization.
Because, as we have seen in Lemma~\ref{le:single}, multi-penalty
regularization can be rewritten as single-parameter regularization
for the regularized operator $B_\beta$ and right hand side $y_\beta$,
we will first discuss recovery conditions in the single-parameter setting.

In order to find conditions for exact support recovery,
we first recall the necessary and sufficient optimality
condition for $\ell^1$-regularization:

\begin{lemma}\label{le:opt}
  The vector $u_\alpha$ minimizes
  \[
  T_\alpha(u) := \frac{1}{2}\|Au-y\|_2^2 + \alpha\|u\|_1,
  \]
  if and only if
  \[
  A^*(Au_\alpha-y) \in -\alpha\mathop{{\rm sgn}}(u_\alpha).
  \]
\end{lemma}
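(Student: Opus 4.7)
The plan is to invoke the standard subdifferential optimality condition from convex analysis. The functional $T_\alpha$ is a sum of two convex functions: a smooth quadratic term $f(u) := \frac{1}{2}\|Au-y\|_2^2$ and the nonsmooth but proper, convex, continuous term $g(u) := \alpha\|u\|_1$. Since $T_\alpha$ is convex, a vector $u_\alpha$ is a (global) minimizer if and only if $0 \in \partial T_\alpha(u_\alpha)$.

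First I would compute the two pieces of the subdifferential. The quadratic term is differentiable everywhere with gradient $\nabla f(u) = A^*(Au-y)$, so $\partial f(u) = \{A^*(Au-y)\}$. For the $\ell^1$-term, the convex subdifferential decomposes componentwise, giving $\partial \|u\|_1 = \mathop{{\rm sgn}}(u)$ with the set-valued sign convention introduced just before the lemma. Multiplying by $\alpha > 0$ gives $\partial g(u) = \alpha\mathop{{\rm sgn}}(u)$.

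Next I would apply the Moreau--Rockafellar sum rule. Since $f$ is finite and continuous everywhere on $\R^N$, the standard constraint qualification is trivially satisfied, and hence
\[
\partial T_\alpha(u) = \partial f(u) + \partial g(u) = A^*(Au-y) + \alpha\mathop{{\rm sgn}}(u).
\]
Combining this with the optimality condition $0 \in \partial T_\alpha(u_\alpha)$ yields
\[
0 \in A^*(Au_\alpha - y) + \alpha\mathop{{\rm sgn}}(u_\alpha),
\]
which is equivalent to the claimed inclusion $A^*(Au_\alpha-y) \in -\alpha\mathop{{\rm sgn}}(u_\alpha)$.

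There is no real obstacle: the lemma is a textbook application of convex subdifferential calculus, and the only ``care'' needed is to invoke the set-valued convention for the sign at zero that the authors have already fixed, and to note that the sum rule applies without hypothesis because the smooth summand is everywhere finite. One could alternatively argue by a direct first-order perturbation $T_\alpha(u_\alpha + t h) \ge T_\alpha(u_\alpha)$, using the directional derivative of $\|\cdot\|_1$ on the support and complement of $u_\alpha$ separately, but the subdifferential argument is cleaner and more standard.
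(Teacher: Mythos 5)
Your proof is correct. The paper itself gives no proof of this lemma --- it is simply recalled as the standard necessary and sufficient optimality condition for $\ell^1$-regularization --- and your argument via $0 \in \partial T_\alpha(u_\alpha)$, the Moreau--Rockafellar sum rule (valid here since the quadratic term is everywhere finite and differentiable), and the componentwise identification $\partial\|u\|_1 = \mathop{{\rm sgn}}(u)$ is precisely the standard derivation the authors are implicitly invoking.
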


Using this result, we obtain a condition that guarantees
exact support recovery for the single-penalty method:

\begin{lemma}\label{le:support}
  We have $\mathop{{\rm supp}}(u_\alpha) = I$,
  if and only if there exists $w_\alpha \in (\R\setminus\{0\})^I$ such that
  \[
  \begin{aligned}
    A_I^*(A_Iw_\alpha-y) &= -\alpha \mathop{{\rm sgn}}(w_\alpha),\\
     \|A_J^*(A_Iw_\alpha-y)\|_\infty &\le \alpha,
  \end{aligned}
  \]
\end{lemma}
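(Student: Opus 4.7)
The plan is to read off both directions directly from the subdifferential optimality condition of Lemma~\ref{le:opt}, by splitting the inclusion $A^*(A u_\alpha - y) \in -\alpha \mathop{{\rm sgn}}(u_\alpha)$ into its components on $I$ and on $J$. This reduces the lemma to two essentially bookkeeping observations: that $\mathop{{\rm sgn}}$ is single-valued at nonzero entries and equals $[-1,1]$ at zero.

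For the forward implication, I would assume $\mathop{{\rm supp}}(u_\alpha) = I$ and set $w_\alpha$ to be the restriction of $u_\alpha$ to $I$, so that $w_\alpha \in (\R\setminus\{0\})^I$ and $A u_\alpha = A_I w_\alpha$. Since $(u_\alpha)_i \ne 0$ for every $i \in I$, the subdifferential is single-valued on these components, and Lemma~\ref{le:opt} restricted to indices in $I$ gives $A_I^*(A_I w_\alpha - y) = -\alpha \mathop{{\rm sgn}}(w_\alpha)$. For $j \in J$ we have $(u_\alpha)_j = 0$, so $\mathop{{\rm sgn}}(0) = [-1,1]$ turns the same inclusion into the componentwise bound $\|A_J^*(A_I w_\alpha - y)\|_\infty \le \alpha$.

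For the reverse implication, I would take the candidate $\tilde u \in \R^N$ obtained by extending $w_\alpha$ by zeros on $J$, so that $A \tilde u = A_I w_\alpha$. The two hypotheses then combine to give $A^*(A \tilde u - y) \in -\alpha \mathop{{\rm sgn}}(\tilde u)$: on $I$ the equality provides the (single-valued) sign of the nonzero entries of $w_\alpha$, and on $J$ the $\ell^\infty$ bound places the entries inside $-\alpha [-1,1] = -\alpha \mathop{{\rm sgn}}(0)$. Lemma~\ref{le:opt} then certifies that $\tilde u$ is a minimizer of $T_\alpha$, and its support equals $I$ because $w_\alpha$ has no zero entries.

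The only subtle point I anticipate is uniqueness: if $T_\alpha$ admits more than one minimizer, the ``if and only if'' must be read as asserting the existence of a minimizer with support exactly $I$. Under the uniqueness convention used elsewhere in the paper (where $(u_{\alpha,\beta},v_{\alpha,\beta})$ is consistently called \emph{the} solution) this causes no trouble; otherwise one would bolster the reverse direction with injectivity of $A_I$ together with a strict inequality $\|A_J^*(A_I w_\alpha - y)\|_\infty < \alpha$ to force every minimizer to have support inside $I$. I expect the proof itself to be short; the real work in the later sections will be in verifying that a $w_\alpha$ with these properties can actually be produced.
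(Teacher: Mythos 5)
Your argument is exactly the paper's: the paper proves this lemma in one sentence by testing the optimality condition of Lemma~\ref{le:opt} on the zero-extension of $w_\alpha$, which is precisely your splitting of the subdifferential inclusion into its $I$ and $J$ components. Your additional remark on uniqueness of minimizers is a fair caveat, but the paper does not address it either, so the two proofs coincide in substance.
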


\begin{proof}
  This immediately follows from Lemma~\ref{le:opt} by
  testing the optimality conditions on the vector $u_\alpha$
  given by $(u_\alpha)_i = (w_\alpha)_i$ for $i \in I$
  and $(u_\alpha)_i = 0$ else.
\end{proof}
\medskip

Our main result concerning support recovery for single-parameter
regularization is the following:

\begin{proposition}\label{pr:cond_single}
  Assume that $A_I$ is injective and that
  \begin{equation}\label{eq:single_inftyrestriction}
  \|A_J^*A_I(A_I^*A_I)^{-1}\|_\infty < 1.
  \end{equation}
  Then the set $\mathcal{S}_{c,d,I}$ defined in~\eqref{eq:ScdI}
  is a set of exact support recovery for the
  unmixing problem whenever
  \begin{equation}\label{eq:cd}
  \frac{c}{d} >
  \frac{\|A_J^*(A_I(A_I^*A_I)^{-1}A_I^*-\Id)A\|_\infty\|(A_I^*A_I)^{-1}\|_\infty}{1-\|A_J^*A_I(A_I^*A_I)^{-1}\|_\infty}
  + \|(A_I^*A_I)^{-1}A_I^*A\|_{\infty}.
  \end{equation}
  Moreover, every parameter $\alpha > 0$ satisfying
  \begin{equation}\label{eq:single_alpha}
  \frac{d\|A_J^*(A_I(A_I^*A_I)^{-1}A_I^*-\Id)A\|_\infty}{1-\|A_J^*A_I(A_I^*A_I)^{-1}\|_\infty}
  \le \alpha
  < \frac{c-d\|(A_I^*A_I)^{-1}A_I^*A\|_{\infty}}{\|(A_I^*A_I)^{-1}\|_\infty}
  \end{equation}
  is admissible on $S_{c,d,I}$.
\end{proposition}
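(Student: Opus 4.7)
The plan is to apply Lemma~\ref{le:support} constructively: I want to exhibit a vector $w_\alpha \in (\R\setminus\{0\})^I$ satisfying both the equality on $I$ and the $\ell^\infty$-inequality on $J$. The first of these optimality conditions, together with the injectivity of $A_I$, is a linear equation once the sign pattern of $w_\alpha$ is fixed. So I would posit $\mathop{{\rm sgn}}(w_\alpha) = \mathop{{\rm sgn}}(u^\dagger_I)$ and define
\[
w_\alpha := u^\dagger_I + (A_I^*A_I)^{-1}A_I^*Av - \alpha (A_I^*A_I)^{-1}\mathop{{\rm sgn}}(u^\dagger_I),
\]
using $y = A u^\dagger + Av$ and $(A_I^*A_I)^{-1}A_I^*Au^\dagger = u^\dagger_I$. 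By construction, this candidate solves the equation $A_I^*(A_Iw_\alpha - y) = -\alpha\mathop{{\rm sgn}}(u^\dagger_I)$; I then have to check two things, which will yield exactly the two-sided bound on $\alpha$.

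First, to legitimately identify $\mathop{{\rm sgn}}(w_\alpha)$ with $\mathop{{\rm sgn}}(u^\dagger_I)$ (so that $w_\alpha$ has no zero components and the sign assumption above was self-consistent), I would estimate
\[
\|w_\alpha - u^\dagger_I\|_\infty \le \|(A_I^*A_I)^{-1}A_I^*A\|_\infty\|v\|_\infty + \alpha\|(A_I^*A_I)^{-1}\|_\infty,
\]
and require the right-hand side to be strictly less than $c$, using $\|v\|_\infty < d$. This produces the upper bound on $\alpha$ in~\eqref{eq:single_alpha}.

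Second, for the complementary condition, I would compute
\[
A_Iw_\alpha - y = \bigl(A_I(A_I^*A_I)^{-1}A_I^* - \Id\bigr)Av - \alpha A_I(A_I^*A_I)^{-1}\mathop{{\rm sgn}}(u^\dagger_I),
\]
apply $A_J^*$, use $\|\mathop{{\rm sgn}}(u^\dagger_I)\|_\infty \le 1$ and $\|v\|_\infty < d$, and demand the resulting bound to be at most $\alpha$. Collecting the term $\alpha\|A_J^*A_I(A_I^*A_I)^{-1}\|_\infty$ on one side and dividing by the positive factor $1 - \|A_J^*A_I(A_I^*A_I)^{-1}\|_\infty$ (which is positive exactly by assumption~\eqref{eq:single_inftyrestriction}) gives the lower bound on $\alpha$ in~\eqref{eq:single_alpha}.

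Finally, the set of admissible $\alpha$ is nonempty precisely when the upper bound exceeds the lower one; rearranging this inequality and solving for $c/d$ yields condition~\eqref{eq:cd}. I do not foresee a serious obstacle: the argument is essentially a clean verification through Lemma~\ref{le:support} together with submultiplicativity of $\|\cdot\|_\infty$. The only point that needs a moment of care is ensuring the sign-pattern assumption is self-consistent, which is why the strict inequality $\|v\|_\infty < d$ in the definition of $\mathcal{S}_{c,d,I}$ is used rather than the weak one.
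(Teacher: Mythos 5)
Your proposal is correct and follows essentially the same route as the paper's proof: the same explicit candidate $w_\alpha = u^\dagger_I + (A_I^*A_I)^{-1}A_I^*Av - \alpha(A_I^*A_I)^{-1}\mathop{{\rm sgn}}(u^\dagger_I)$, the same sign-consistency estimate yielding the upper bound in~\eqref{eq:single_alpha}, the same bound on $\|A_J^*(A_Iw_\alpha-y)\|_\infty$ yielding the lower bound, and the same observation that~\eqref{eq:cd} is exactly the nonemptiness of the resulting interval of admissible $\alpha$. No gaps.
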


\begin{proof}
  First we note that the injectivity of $A_I$ implies that
  the mapping $A_I^*A_I$ is invertible. Thus the
  condition~\eqref{eq:single_inftyrestriction} actually makes sense.
  Moreover, the inequality~\eqref{eq:cd} is necessary
  and sufficient for the existence of $\alpha$ satisfying~\eqref{eq:single_alpha}.

  Now let $(u^\dagger,v) \in \mathcal{S}_{c,d,I}$ and assume that
  $\alpha$ satisfies~\eqref{eq:single_alpha}.
  We denote
  \[
  s_i^\dagger := \mathop{{\rm sgn}}(u_i^\dagger),
  \qquad
  i \in I,
  \]
  and define
  \[
  w_\alpha := u_I^\dagger + (A_I^*A_I)^{-1}A_I^*Av - \alpha (A_I^*A_I)^{-1}s^\dagger.
  \]
  Because $\|s^\dagger\|_{\infty} = 1$, it follows from the second inequality
  in~\eqref{eq:single_alpha} that
  \[
  |(w_\alpha)_i - u_i^\dagger|
  \le \|(A_I^*A_I)^{-1}A_I^*A\|_\infty\|v\|_\infty + \alpha \|(A_I^*A_I)^{-1}\|_\infty
  < c \le |u_i^\dagger|,
  \]
  and therefore
  \[
  \mathop{{\rm sgn}}(w_\alpha) = \mathop{{\rm sgn}}(u_I^\dagger) = s^\dagger.
  \]
  Thus $w_\alpha$ actually satisfies the equation
  \[
  \begin{aligned}
  w_\alpha &= u_I^\dagger + (A_I^*A_I)^{-1}A_I^*Av - \alpha (A_I^*A_I)^{-1}\mathop{{\rm sgn}}(w_\alpha)\\
  &= (A_I^*A_I)^{-1}A_I^*(A_Iu_I^\dagger + Av) - \alpha (A_I^*A_I)^{-1}\mathop{{\rm sgn}}(w_\alpha)\\
  &= (A_I^*A_I)^{-1}A_I^*y- \alpha (A_I^*A_I)^{-1}\mathop{{\rm sgn}}(w_\alpha),
  \end{aligned}
  \]
  and thus
  \[
  A_I^*(A_Iw_\alpha-y) = -\alpha\mathop{{\rm sgn}}(w_\alpha),
  \]
  which is the first condition in Lemma~\ref{le:support}.

  It remains to show that
  \[
  \|A_J^*(A_Iw_\alpha-y)\|_\infty\le \alpha.
  \]
  However,
  \[
  \begin{aligned}
    A_J^*(A_Iw_\alpha-y)
    &= A_J^*(A_Iw_\alpha-A_Iu_I^\dagger-Av)\\
    &= A_J^*(A_I(A_I^*A_I)^{-1}A_I^*Av-Av-\alpha A_I(A_I^*A_I)^{-1}s^\dagger),
  \end{aligned}
  \]
  and thus
  \[
  \begin{aligned}
  \|A_J^*(A_Iw_\alpha-y)\|_\infty
  &\le d\|A_J^*(A_I(A_I^*A_I)^{-1}A_I^*-\Id)A\|_\infty\\
  &\qquad\qquad\qquad\qquad+ \alpha\|A_J^*A_I(A_I^*A_I)^{-1}\|_\infty.
  \end{aligned}
  \]
  Now the first inequality in~\eqref{eq:single_alpha} implies
  that this term is smaller than $\alpha$.
  Thus $w_\alpha$ satisfies the conditions of Lemma~\ref{le:support},
  and thus $\mathop{{\rm supp}}(u_\alpha) = I$.
\end{proof}

\begin{remark}
  In the case where $A = \Id$ is the identity operator, the conditions
  above reduce to the conditions that $c > 2d$ and $d \le \alpha < c-d$.
  Since $\ell^1$-regularization in this setting reduces to soft 
  thresholding, these conditions are very natural and are
  actually both sufficient and necessary:
  Since the noise may componentwise reach the value of $d$,
  it is necessary to choose a regularization parameter of at least
  $d$ in order to remove it. However, on the support $I$
  of the signal, the smallest values of the noisy signal value
  are at least of size $c-d$. Thus they are retained as long
  as the regularization parameter does not exceed this value.

  For more complicated operators $A$, the situation is similar, i.e., a too small regularization parameter $\alpha$ is not
  able to remove all the noise, while a too large one destroys
  part of the signal as well.
  The exact bounds for the admissible regularization parameters,
  however, are much more complicated.
\end{remark}

\section{Sets of Exact Support Recovery---Multi-Penalty Setting}

We now consider the setting of multi-penalty regularization for the
solution of the unmixing problem. Applying Lemma~\ref{le:single}, we
can treat multi-penalty regularization with the same methods as
single-penalty regularization.
To that end, we introduce the regularized operator
\begin{equation}\label{eq:Abeta}
A_\beta := \Bigl( I + \frac{AA^*}{\beta}\Bigr)^{-1}A.
\end{equation}
In particular, we have with the notation of Lemma~\ref{le:support}
that
$B_\beta^* B = A_\beta^*A$ and $B_\beta^*y_\beta = A_\beta^*y$.

As a first result, we obtain the following
analogon to Lemma~\ref{le:support}:

\begin{lemma}\label{le:support_multi}
  We have $\mathop{{\rm supp}}(u_{\alpha,\beta}) = I$,
  if and only if there exists $w_\alpha \in (\R\setminus\{0\})^I$ such that
  \[
  \begin{aligned}
    A_{\beta,I}^*(A_Iw_{\alpha,\beta}-y) &= -\alpha \mathop{{\rm sgn}}(w_{\alpha,\beta}),\\
    \|A_{\beta,J}^*(A_Iw_\alpha-y)\|_\infty &\le \alpha,
  \end{aligned}
  \]
\end{lemma}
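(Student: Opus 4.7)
The plan is to reduce everything to the single-penalty situation via Lemma~\ref{le:single} and then imitate the proof of Lemma~\ref{le:support}. By that lemma, $(u_{\alpha,\beta},v_{\alpha,\beta})$ minimizes $T_{\alpha,\beta}$ if and only if $u_{\alpha,\beta}$ minimizes $\frac{1}{2}\|B_\beta u - y_\beta\|_2^2 + \alpha\|u\|_1$, and Lemma~\ref{le:opt} then gives the necessary and sufficient optimality condition
\[
B_\beta^*(B_\beta u_{\alpha,\beta} - y_\beta) \in -\alpha\mathop{{\rm sgn}}(u_{\alpha,\beta}).
\]

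Next, I would rewrite this condition in terms of $A_\beta$. A short computation from the definitions of $B_\beta$, $y_\beta$ and $A_\beta$ yields the identities
\[
B_\beta^*B_\beta = A^*(\Id+AA^*/\beta)^{-1}A = A_\beta^*A, \qquad B_\beta^* y_\beta = A_\beta^* y,
\]
which are precisely the ones flagged in the excerpt just before the lemma. Hence the optimality condition is equivalent to
\[
A_\beta^*(Au_{\alpha,\beta}-y) \in -\alpha\mathop{{\rm sgn}}(u_{\alpha,\beta}).
\]

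Now I would run the two implications separately. For the forward direction, suppose $\mathop{{\rm supp}}(u_{\alpha,\beta}) = I$ and set $w_{\alpha,\beta} := (u_{\alpha,\beta})_I \in (\R\setminus\{0\})^I$. Since $u_{\alpha,\beta}$ is supported on $I$, we have $Au_{\alpha,\beta} = A_I w_{\alpha,\beta}$. Projecting the displayed condition onto the coordinates of $I$ gives the first equation of the lemma; projecting onto the coordinates in $J$, together with the fact that $\mathop{{\rm sgn}}(0) = [-1,1]$, yields $\|A_{\beta,J}^*(A_I w_{\alpha,\beta}-y)\|_\infty \le \alpha$. For the converse, given $w_{\alpha,\beta}$ as in the statement, define $u \in \R^N$ by $u_I = w_{\alpha,\beta}$ and $u_J = 0$; then $Au = A_I w_{\alpha,\beta}$, and the two hypotheses together state exactly that $A_\beta^*(Au-y) \in -\alpha\mathop{{\rm sgn}}(u)$, i.e., $B_\beta^*(B_\beta u - y_\beta) \in -\alpha\mathop{{\rm sgn}}(u)$. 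Lemma~\ref{le:opt} then forces $u$ to minimize the single-penalty functional associated to $B_\beta$ and $y_\beta$, so by Lemma~\ref{le:single} we have $u = u_{\alpha,\beta}$, and the hypothesis $w_{\alpha,\beta} \in (\R\setminus\{0\})^I$ gives $\mathop{{\rm supp}}(u_{\alpha,\beta}) = I$.

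The only step that requires care, rather than pure bookkeeping, is the identification $B_\beta^*B_\beta = A_\beta^*A$ and $B_\beta^*y_\beta = A_\beta^*y$, which is what lets us translate the optimality condition from the symmetrically preconditioned operator $B_\beta$ to the simpler unsymmetric object $A_\beta$ used throughout the rest of the multi-penalty analysis; everything else is the same routine projection-onto-$I$-and-$J$ argument as in Lemma~\ref{le:support}.
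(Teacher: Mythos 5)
Your proposal is correct and follows essentially the same route as the paper: reduce to the single-penalty problem for $B_\beta$ and $y_\beta$ via Lemma~\ref{le:single}, apply the optimality condition of Lemma~\ref{le:opt}, and translate to the $A_\beta$ formulation using the identities $B_\beta^*B_\beta = A_\beta^*A$ and $B_\beta^*y_\beta = A_\beta^*y$. You merely spell out the projection onto $I$ and $J$ explicitly, where the paper delegates that bookkeeping to the already-proved Lemma~\ref{le:support}.
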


\begin{proof}
  Applying Lemma~\ref{le:opt} to the single-penalty
  problem~\eqref{eq:minu}, we obtain the conditions
  \[
  \begin{aligned}
    B_{\beta,I}^*(B_{\beta,I}w_{\alpha,\beta}-y_\beta) &= -\alpha \mathop{{\rm sgn}}(w_{\alpha,\beta}),\\
    \|B_{\beta,J}^*(B_{\beta,I}w_{\alpha,\beta}-y_\beta)\|_\infty &\le \alpha.
  \end{aligned}
  \]
  Now the claim follows from the equalities
  \[
  \begin{aligned}
  B_{\beta,I}^*B_{\beta,I} &= A_{\beta,I}^*A_I,&
  B_{\beta,I}^*y_\beta &= A_{\beta,I}^*y,\\
  B_{\beta,I}^*B_{\beta,I} &= A_{\beta,I}^*A_I,&
  B_{\beta,I}^*y_\beta &= A_{\beta,I}^*y.
  \end{aligned}
  \]
\end{proof}

Since the proof of Proposition~\ref{pr:cond_single} only
depends on the optimality conditions and the representation
of the data as $y = Au^\dagger + Av$, we immediately
obtain a generalization of Proposition~\ref{pr:cond_single}
to the multi-penalty setting.

\begin{proposition}\label{pr:cond_multi}
  Assume that $0 < \beta < \infty$ is such that
  \begin{equation}\label{eq:cond_multi_nec}
  \|A_{\beta,J}^*A_{I}(A_{\beta,I}^*A_{I})^{-1}\|_\infty < 1.
  \end{equation}
  Then the set $\mathcal{S}_{c,d,I}$ is a set of exact support recovery for the
  unmixing problem in the multi-penalty setting whenever
  \begin{multline}
    \frac{c}{d} >\|(A_{\beta,I}^*A_I)^{-1}A_{\beta,I}^*A\|_{\infty}\\
    + \frac{\|A_{\beta,J}^*(A_I(A_{\beta,I}^*A_I)^{-1}A_{\beta,I}^*-\Id)A\|_\infty
      \|(A_{\beta,I}^*A_I)^{-1}\|_\infty}{1-\|A_{\beta,J}^*A_I(A_{\beta,I}^*A_I)^{-1}\|_\infty}.
    \label{eq:cond_multi_1}
  \end{multline}
  Moreover, all the pairs of parameter $(\alpha,\beta)$
  satisfying~\eqref{eq:cond_multi_1} and
  \begin{equation}\label{eq:multi_alpha}
  \frac{d\|A_{\beta,J}^*(A_I(A_{\beta,I}^*A_I)^{-1}A_{\beta,I}^*-\Id)A\|_\infty}{1-\|A_{\beta,J}^*A_I(A_{\beta,I}^*A_I)^{-1}\|_\infty}
  \le \alpha
  < \frac{c-d\|(A_{\beta,I}^*A_I)^{-1}A_{\beta,I}^*A\|_{\infty}}{\|(A_{\beta,I}^*A_I)^{-1}\|_\infty}
  \end{equation}
  are admissible on $\mathcal{S}_{c,d,I}$.
\end{proposition}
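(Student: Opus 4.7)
The plan is to mimic the proof of Proposition~\ref{pr:cond_single} essentially verbatim, replacing every occurrence of $A_I^*$ or $A_J^*$ by the regularized adjoint $A_{\beta,I}^*$ or $A_{\beta,J}^*$, while keeping the unregularized $A$ and $A_I$ in their role as the synthesis operators producing the data $y = A_I u_I^\dagger + A v$. The legitimacy of this substitution is exactly the content of Lemma~\ref{le:support_multi}, whose optimality conditions have this asymmetric form. As a preliminary, I would note that~\eqref{eq:cond_multi_nec} only makes sense if $A_{\beta,I}^*A_I$ is invertible, and that~\eqref{eq:cond_multi_1} is precisely the condition ensuring that the interval of $\alpha$'s described by~\eqref{eq:multi_alpha} is non-empty.

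Given $(u^\dagger,v)\in\mathcal{S}_{c,d,I}$, $\alpha$ satisfying~\eqref{eq:multi_alpha}, and writing $s^\dagger_i := \mathop{\rm sgn}(u^\dagger_i)$ for $i\in I$, I would define the candidate
\[
w_{\alpha,\beta} := u_I^\dagger + (A_{\beta,I}^*A_I)^{-1}A_{\beta,I}^*A v - \alpha (A_{\beta,I}^*A_I)^{-1} s^\dagger.
\]
The upper bound on $\alpha$ in~\eqref{eq:multi_alpha}, combined with $\|v\|_\infty < d$ and $\|s^\dagger\|_\infty = 1$, yields the componentwise estimate $|(w_{\alpha,\beta})_i - u_i^\dagger| < c \le |u_i^\dagger|$ for $i\in I$, hence $\mathop{\rm sgn}(w_{\alpha,\beta}) = s^\dagger$. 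Using $y = A_I u_I^\dagger + A v$ and the trivial identity $A_{\beta,I}^*A_I \,(A_{\beta,I}^*A_I)^{-1} = \mathrm{Id}$, a direct substitution then gives the first optimality condition
\[
A_{\beta,I}^*(A_I w_{\alpha,\beta} - y) = -\alpha \mathop{\rm sgn}(w_{\alpha,\beta}).
\]

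For the second condition, I would expand
\[
A_{\beta,J}^*(A_I w_{\alpha,\beta} - y) = A_{\beta,J}^*\bigl(A_I(A_{\beta,I}^*A_I)^{-1}A_{\beta,I}^*A v - A v - \alpha A_I(A_{\beta,I}^*A_I)^{-1} s^\dagger\bigr)
\]
and take $\ell^\infty$-norms, using $\|v\|_\infty < d$, $\|s^\dagger\|_\infty = 1$, and the definitions of the two operator norms appearing in~\eqref{eq:multi_alpha}. Under~\eqref{eq:cond_multi_nec} the lower bound on $\alpha$ is exactly what is needed to rearrange the resulting estimate into $\|A_{\beta,J}^*(A_I w_{\alpha,\beta}-y)\|_\infty \le \alpha$. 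Lemma~\ref{le:support_multi} then gives $\mathop{\rm supp}(u_{\alpha,\beta}) = I$, proving admissibility of every such $\alpha$.

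I do not expect any genuinely hard step: the argument is a template translation of the single-penalty proof through Lemma~\ref{le:support_multi}. The only place where one has to be careful is bookkeeping—keeping straight where $A$ vs.\ $A_\beta$ and $A_I$ vs.\ $A_{\beta,I}$ occur, since $B_\beta^* B_\beta = A_\beta^* A$ is genuinely an asymmetric product, not a square. Once the substitutions are performed consistently, the calculation proceeds line-by-line as in Proposition~\ref{pr:cond_single}.
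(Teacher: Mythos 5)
Your proposal is correct and follows exactly the route the paper intends: the paper's own proof of this proposition consists of the single sentence that it is analogous to the proof of Proposition~\ref{pr:cond_single}, and your write-up is precisely that analogous argument, with the candidate $w_{\alpha,\beta}$, the sign verification from the upper bound on $\alpha$, and the dual bound from the lower bound on $\alpha$ all matching the single-penalty template carried through Lemma~\ref{le:support_multi}. Your remark about the asymmetric product $A_{\beta,I}^*A_I$ (and its invertibility being implicit in~\eqref{eq:cond_multi_nec}) is the right point to be careful about, and you handle it correctly.
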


\begin{proof}
  The proof is analogous to the proof of Proposition~\ref{pr:cond_single}.
\end{proof}

\begin{remark}
  We note that the condition
  \[
  \|A_J^*A_I(A_I^*A_I)^{-1}\|_\infty < 1
  \]
  implies the analogous inequality for $A_\beta$ provided that $\beta$
  is sufficiently large. Similarly, if $\alpha$ satisfies the
  conditions in Proposition~\ref{pr:cond_single} that guarantee
  admissibility on $S_{c,d,I}$, the pair $(\alpha,\beta)$ will satisfy
  the conditions for admissibility in Proposition~\ref{pr:cond_multi}
  provided that $\beta$ is sufficiently large. The converse, however,
  need not be true: If the pair $(\alpha,\beta)$ is admissible for exact
  support recovery on $\mathcal{S}_{c,d,k}$ with multi-penalty
  regularization, it need not be true that the single parameter
  $\alpha$ is admissble for the single-penalty setting as well.
  Examples where this actually happens can be found in Section~\ref{se:valid}
  (see in particular Table~\ref{tb:cond}).
\end{remark}

\section{Admissible parameters}

As a consequence of Propositions~\ref{pr:cond_single} and~\ref{pr:cond_multi},
we obtain that the condition
\begin{equation}\label{eq:cond_k}
\sup_{|I| \le k} \|A_{\beta,J}^*A_{I}(A_{\beta,I}^*A_{I})^{-1}\|_\infty < 1
\end{equation}
is sufficient for $S_{c,d,k}$ to be a set of exact support recovery
for the unmixing problem, provided that the ratio $c/d$ is sufficiently large;
the condition for the single-parameter case
can be extracted from~\eqref{eq:cond_k} by setting $\beta = \infty$,
in which case $A_\beta$ reduces to $A$.

Now define the signal-to-noise ratio of a pair $(u,v)$ as
\[
\rho(u,v) := \frac{\inf\bigl\{|u_i| : i \in \mathop{{\rm supp}}(u)\bigr\}}{\|v\|_\infty}.
\]
That is, $\rho(u,v)$ is the ratio of the smallest significant value
of the signal $u$, and the largest value of the noise $v$.
Denote moreover
\begin{multline*}
R(\beta,k) := \max_{|I|\le k} \Biggl\{\frac{\|A_{\beta,J}^*(A_I(A_{\beta,I}^*A_I)^{-1}A_{\beta,I}^*-\Id)A\|_\infty
      \|(A_{\beta,I}^*A_I)^{-1}\|_\infty}{1-\|A_{\beta,J}^*A_I(A_{\beta,I}^*A_I)^{-1}\|_\infty}\\
   + \|(A_{\beta,I}^*A_I)^{-1}A_{\beta,I}^*A\|_{\infty}\Biggr\}.
\end{multline*}
Then the inequality~\eqref{eq:cond_multi_1} implies that
multi-penalty regularization with parameter $\beta$
allows for the recovery of the support of
$k$-sparse vectors $u$ from data $A(u+v)$ provided the signal-to-noise ratio
of the pair $(u,v)$ satisfies
\[
\rho(u,v) > R_{\beta,k}.
\]
Whenever the signal-to-noise ratio is larger than $R_{\beta,k}$,
we can recover the support of the vector $u$ with \emph{some}
regularization parameter $\alpha$.
There are, however, upper and lower limits for the admissible
parameters $\alpha$, given by inequality~\eqref{eq:multi_alpha}.
In order to visualize them, we consider instead the ratio
\[
\theta(\alpha,v) := \frac{\alpha}{\|v\|_\infty}.
\]
Defining
\[
\Theta^{\min}_{\beta,k} := \max_{|I|\le k}
\frac{\|A_{\beta,J}^*(A_I(A_{\beta,I}^*A_I)^{-1}A_{\beta,I}^*-\Id)A\|_\infty}{1-\|A_{\beta,J}^*A_I(A_{\beta,I}^*A_I)^{-1}\|_\infty}
\]
and
\[
\Theta^{\max}_{\beta,k}(\vartheta)
:= \min_{|I|\le k}\frac{\vartheta-\|(A_{\beta,I}^*A_I)^{-1}A_{\beta,I}^*A\|_{\infty}}{\|(A_{\beta,I}^*A_I)^{-1}\|_\infty},
\]
we then obtain the condition
\begin{equation}\label{eq:theta}
\Theta^{\min}_{\beta,k} \le \theta(\alpha,v) < \Theta^{\max}_{\beta,k}\bigl(\rho(u,v)\bigr)
\end{equation}
for exact support recovery.
If the ratio $\theta(\alpha,v)$ is smaller than $\Theta^{\min}_{\beta,k}$,
then it can happen that some of the noise $v$ is not filtered
out by the regularization method.
On the other hand, if $\theta(\alpha,v)$ is larger than
$\Theta^{\max}_{\beta,k}\bigl(\rho(u,v)\bigr)$, then some parts of
the signal $u^\dagger$ might actually be lost because of
the regularization.

We note that the function $\Theta^{\max}_{\beta,k}$ is piecewise linear
and concave, and $\lim_{\vartheta\to \infty}\Theta^{\max}_{\beta,k}(\vartheta) = +\infty$.
Thus the region of admissible parameters defined by~\eqref{eq:theta}
is a convex and unbounded polyhedron.
Moreover, we have that $\Theta_{\beta,k}^{\max}(R_{\beta,k}) = \Theta_{\beta,k}^{\min}$.
Additionally, we note that the behaviour of the function $\Theta_{\beta,k}^{\max}$
near infinity is determined by the term
\[
\Sigma_{\beta,k} := \max_{|I| \le k} \|(A_{\beta,I}^*A_I)^{-1}\|_\infty.
\]
If this value is small, then the slope of the function
$\Theta_{\beta,k}^{\max}(\vartheta)$ for large values of $\vartheta$
is large, and thus the set of admissible parameter grows
fast with increasing signal-to-noise ratio.
If, on the other hand, $\Sigma_{\beta,k}$ is large, then the
set of admissible parameters is relatively small
even for large signal-to-noise ratio.
Thus $\Sigma_{\beta,k}$ can be reasonably interpreted as
the sensitivity of multi-parameter regularization with
respect to parameter choice.
The larger $\Sigma_{\beta,k}$ is, the more precise the parameter $\alpha$
has to be chosen in order to guarantee exact support recovery.

\section{Numerical Validation}\label{se:valid}

The main motive behind the study and application of multi-penalty
regularization is the problem that $\ell^1$-regularization is often
not capable to identify the support of signal correctly (see \cite{arfopeXX} and references therein).
Including the additional $\ell^2$-regularization term, however, might
lead to an improved performance in terms of support recovery, because
we can expect that the $\ell^2$-term takes care of all the small noise
components. 

In order to verify  this observation, a series of numerical experiments was performed, in which we illustrate for which parameters and Gaussian matrices the conditions for support recovery derived in the previous
section were satisfied. In addition, we studied whether the
inclusion of the $\ell^2$-term indeed increases the performance.

In a first set of experiments, we have generated a set of 20 Gaussian
random matrices of different sizes and have tested for each
three-dimensional subspace spanned by the basis elements whether the
condition~\eqref{eq:cond_multi_nec} is satisfied, first for
the single-penalty case, and then for the multi-penalty case with
different values of $\beta$.
The results for matrices of dimensions 30 times 60 and 40 times 80,
respectively, are summarized in Table~\ref{tb:cond} and
Figure~\ref{fi:cond}.

As to be expected from the bad numerical performance of
$\ell^1$-regularization in terms of support recovery, the
inequality~\eqref{eq:single_inftyrestriction} fails in a relatively
large number of cases, especially when the discrepancy between the
dimension $N$ of the vectors to be recovered and the number of
measurements $m$ is quite large. For instance, in the case $N=60$ and
$m=30$, the condition most of the time failed for more than half of
the three-dimensional subspaces. In contrast, the corresponding
condition~\eqref{eq:cond_multi_nec} for multi-parameter regularization
fails in the same situation only for about an eighth of the subspaces
if $\beta = 1$, and in even fewer cases for $\beta = 0.1$.

For other combinations of dimensionality of the problem and number of
measurements, the situation is similar. Introducing the additional
$\ell^2$-penalty term always allows for the exact support
reconstruction on a larger number of subspaces than single-penalty
regularization. Additionally, the results indicate that the number of
recoverable subspaces increases with decreasing $\beta$.

\begin{table}[h]
\begin{tabular}{r||c|ccc}
$m=30$  &\ Single-penalty\ {} & \multicolumn{3}{c}{Multi-penalty}\\
$N=60$ & &\quad $\beta = 10$ \quad{} &\quad $\beta = 1$\quad{} &\quad $\beta = 0.1$\quad{} \\
  \hline
Median & 0.5425 & 0.3814 & 0.1214 & 0.0623\\
Mean & 0.5559 & 0.3922 & 0.1225 & 0.0635\\
Std.~deviation & 0.05652 & 0.04142 & 0.01518 & 0.01083\\
\end{tabular}
\medskip

\begin{tabular}{r||c|ccc}
$m=40$  &\ Single-penalty\ {} & \multicolumn{3}{c}{Multi-penalty}\\
$N=80$  & &\quad $\beta = 10$ \quad{} &\quad $\beta = 1$\quad{} &\quad $\beta = 0.1$\quad{} \\
  \hline
Median & 0.2696 & 0.1523 & 0.0396 & 0.0256\\
Mean &   0.2746 & 0.1547 & 0.0413 & 0.0262\\
Std.~deviation & 0.03060 & 0.01848 & 0.00659 & 0.00447 \\
\end{tabular}
\caption{\label{tb:cond}
    Percentage of 3-sparse subspaces for which the condition~\eqref{eq:cond_multi_nec}
  failed. The condition was tested on samples of 20 Gaussian random matrices
  of dimensions 30 times 60 (upper table) and 40 times 80 (lower table).
  Other combinations of dimensionality and number of measurements showed
  qualitatively similar results.
}
\end{table}

\begin{figure}[h]
  \includegraphics[width=0.45\textwidth]{./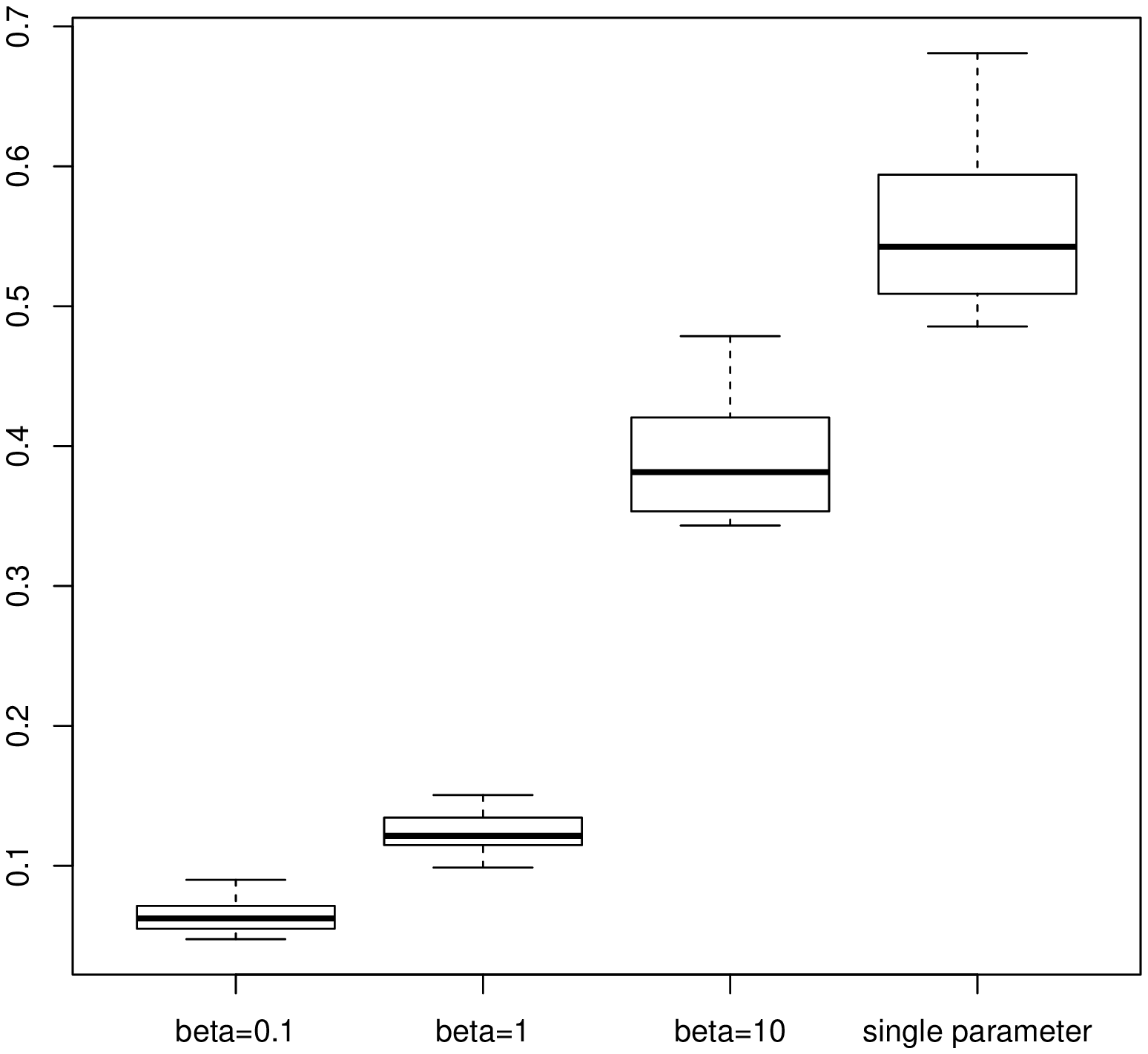}
  \includegraphics[width=0.45\textwidth]{./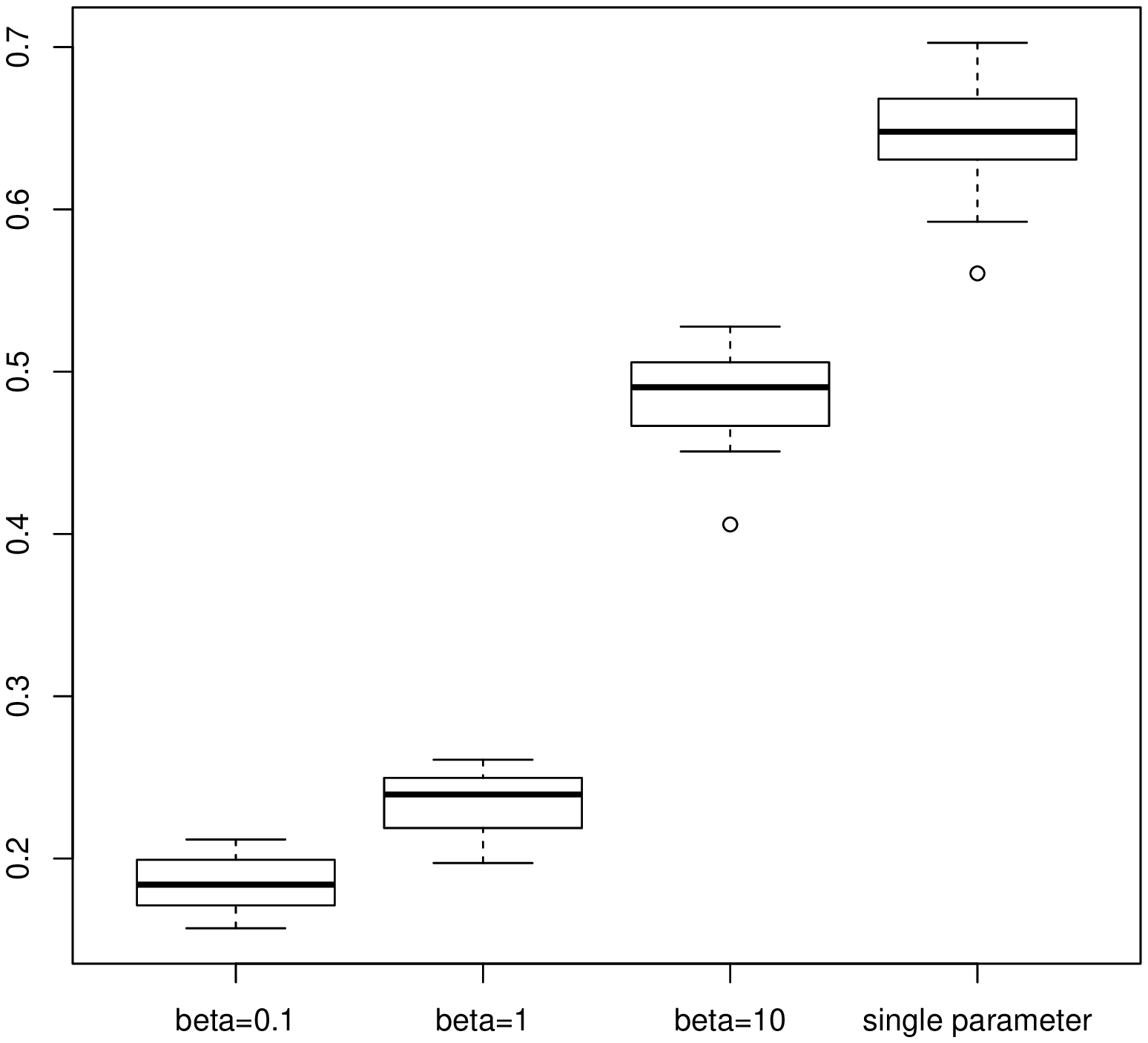}\\
  \includegraphics[width=0.45\textwidth]{./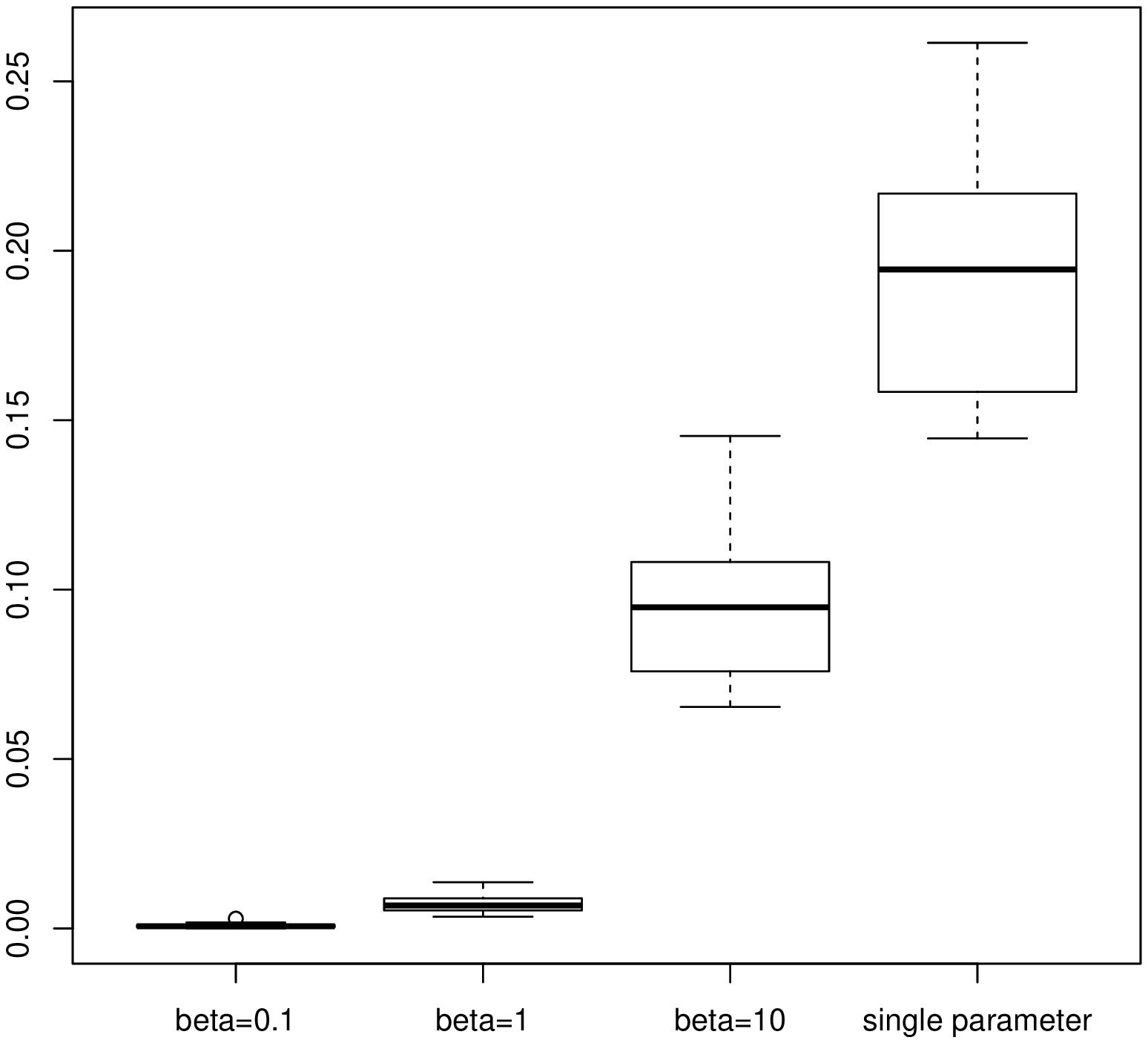}
  \includegraphics[width=0.45\textwidth]{./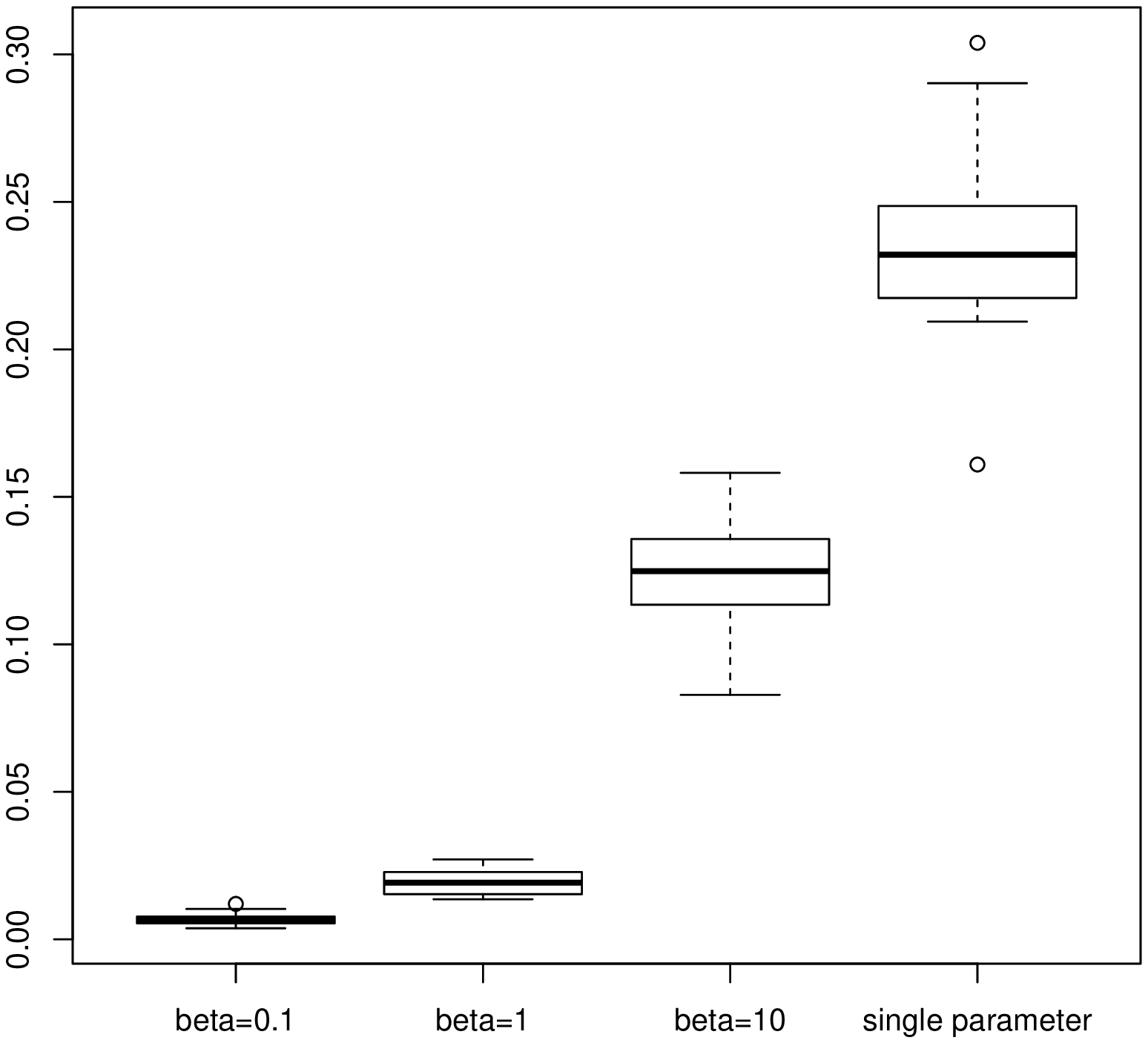}
  \caption{\label{fi:cond}
    Percentage of 3-sparse subspaces for which the condition~\eqref{eq:cond_multi_nec}
    failed. For each of the different settings the condition
    has been tested on 20 Gaussian random matrices
    for multi-parameter regularization with $\beta = 0.1$,
    $\beta = 1$ and $\beta = 10$, and for single parameter
    regularization.
    \emph{Upper left:} Dimension 30 times 60.
    \emph{Upper right:} Dimension 30 times 80.
    \emph{Lower left:} Dimension 40 times 60.
    \emph{Lower right:} Dimension 40 times 80.} 
\end{figure}

In the case where $N = 80$ and $m=60$ (that is, we want to reconstruct
$80$-dimensional vectors from 60 measurements), the sufficient
condition~\eqref{eq:cond_multi_nec} for multi-penalty regularization
was satisfied in our numerical experiments for all $3$-sparse
subspaces for parameters $\beta$ smaller than 5.
In this situation, we have therefore additionally computed the
significant values $R_{\beta,3}$ and $\Sigma_{\beta,3}$,
that is, the minimal recoverable signal-to-noise ratio and the
parameter sensitivity for three-dimensional subspaces.
The results of these numerical experiments are shown in
Table~\ref{tb:beta} and Figure~\ref{fi:beta}.

The results indicate that decreasing the regularization parameter
$\beta$ tends to decrease the necessary signal-to-noise ratio
$R_{\beta,k}$ as well. While a regularization parameter $\beta = 5$
required always an unreasonably large signal-to-noise ratio in order
to guarantee the recoverability of the support, the ratios for $\beta
= 0.5$ or $\beta = 0.1$ turned out to be much more
reasonable. However, the results also showed that decreasing the
regularization parameter need not necessarily have a beneficial effect
on the recoverability. For some matrices it happened that the
necessary signal-to-noise ratio $R_{\beta,k}$ increased while the
regularization parameter $\beta$ was decreased
(see Figure~\ref{fi:beta}, left).

Additionally, the results indicate that the parameter sensitivity
$\Sigma_{\beta,k}$ increases considerably as $\beta$ decreases (see
Figure~\ref{fi:beta}, right). As a consequence, the range of
admissible parameters $\alpha$ tends to be significantly smaller for
smaller $\beta$, and it can happen much more easily that the combined
effect of $\ell^2$ and $\ell^1$-regularization leads to a
classification of signals as noise. While multi-penalty regularization
with a small parameter $\beta$ might therefore lead to a better
necessary signal-to-noise ratio for recovery, it requires at the same
time a better balance between the two involved parameters $\alpha$ and
$\beta$.

\begin{figure}[h]
  \includegraphics[width=0.45\textwidth]{./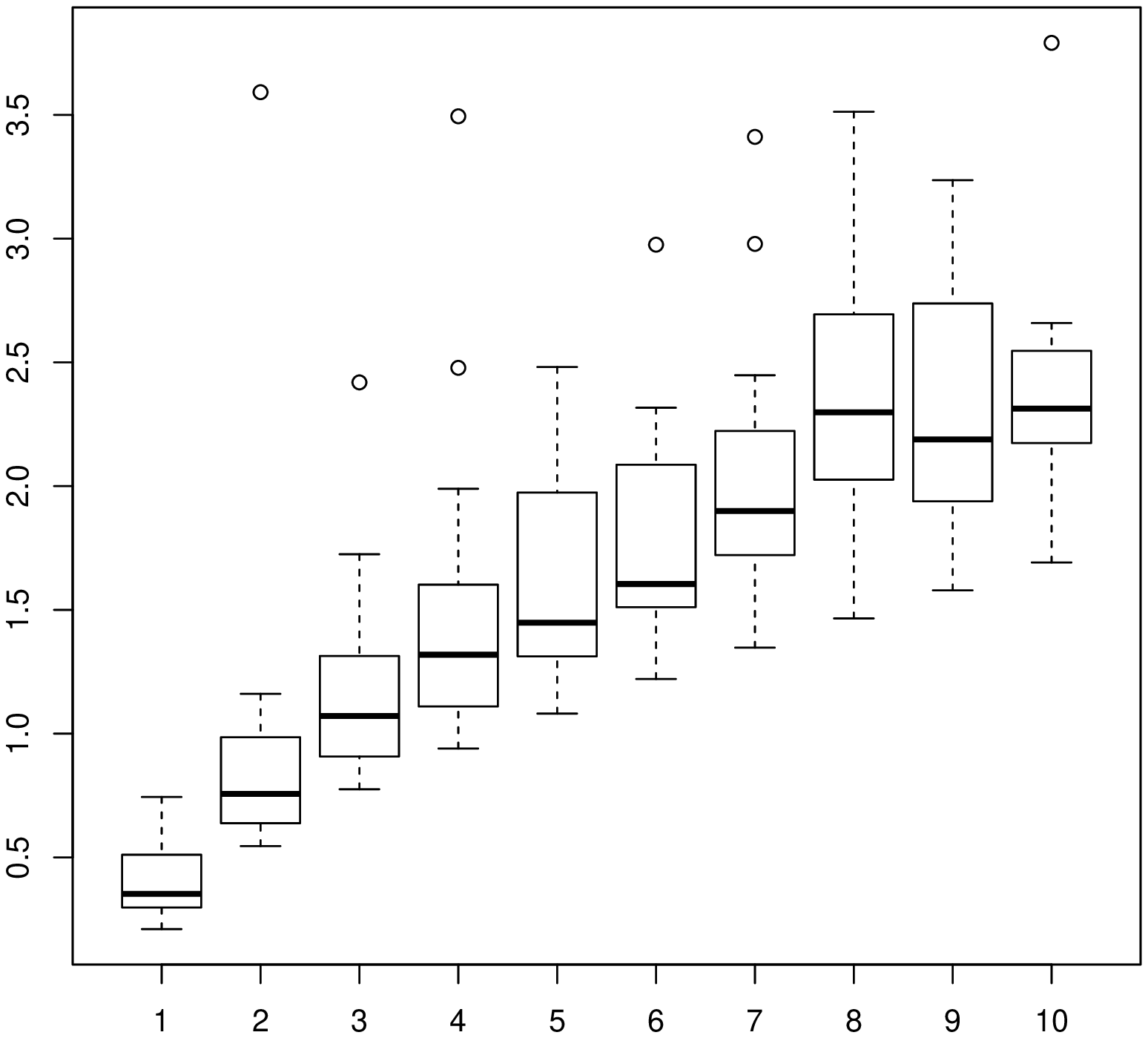}
  \includegraphics[width=0.45\textwidth]{./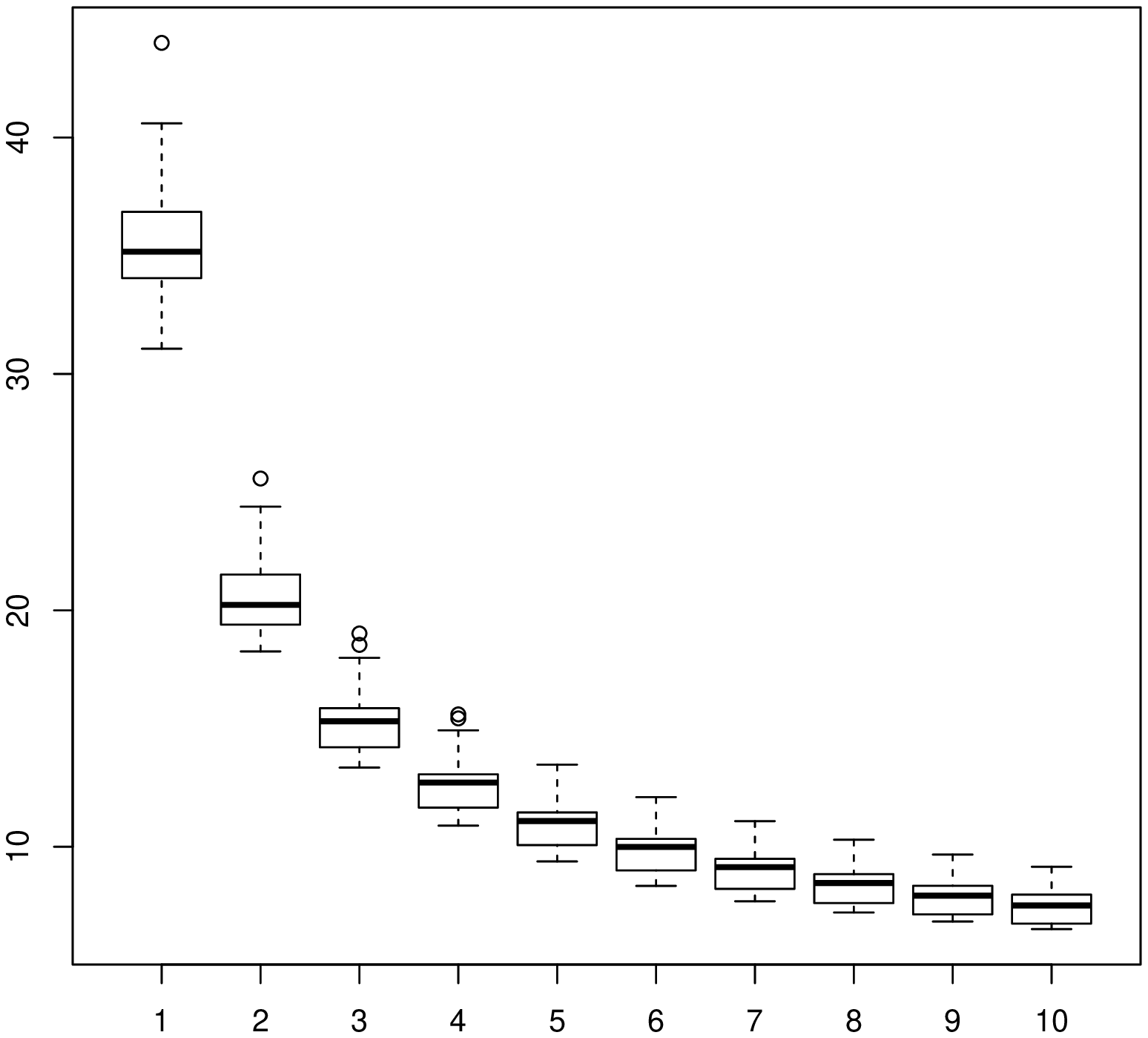}
  \caption{\label{fi:beta}
    Influence of the regularization parameter $\beta$
    on the set of exact support recovery and the
    admissible parameters.
    \emph{Left:} Logarithm of base 10 of the recoverable
    signal-to-noise ratio $R_{\beta,3}$ for different values
    of $\beta$.
    \emph{Right:} The sensitivity $\Sigma_{\beta,3}$ for different
    values of $\beta$.\newline
    The dimension of the matrix is in all cases 60 times 80,
    and 20 Gaussian random matrices have been
    used for each parameter $\beta$.} 
\end{figure}

\begin{table}[h]
  \begin{tabular}{rr|rrrrr}
    & &\multicolumn{5}{c}{$\beta$}\\
    & & $0.1$ & $0.3$ & $0.5$\hfil & $1$\hfil & $5$\hfil \\
    \hline
    & minimum & 1.623 & 5.961 & 12.05 & 49.14 & 3599.3 \\
    $R_{\beta,3}$ & median & 2.252 & 11.84 & 28.17 & 205.7 & 15041.4 \\
    & maximum & 5.546 & 262.5 & 302.85 & 6178.4 & 203621.6\\
    \hline
    & minimum & 31.07 & 13.35 & 9.379 & 6.517 & 3.793 \\
    $\Sigma_{\beta,3}$ & median & 35.17 & 15.31 & 11.086 & 7.517 & 4.278 \\
    & maximum & 44.00 & 19.02 & 13.474 & 9.154 & 4.977 \\
  \end{tabular}
  \caption{\label{tb:beta}
    Influence of the parameter $\beta$ on the values
    of $R_{\beta,k}$ and $\Sigma_{\beta,k}$. The values have been
    computed for 20 Gaussian random matrices of dimension
    60 times 80 with $k=3$.}
\end{table}

\section{Applications. Numerical experiments}

In order to support our theoretical findings even further, we present in this
section some statistical data obtained by solving series of
compressive sensing problems  by means of  multi-penalty  and
$\ell^1$-regularization. Similarly to \cite{naumpeter, arfopeXX} we
consider in our numerical experiments the model problem of the type 
\[
	y= T (u^\dagger + v),
\]
where $T \in  \R^{m \times N}$ is an i.i.d.\ Gaussian matrix, $u^\dag$ is a sparse vector and $v$ is a noise vector. The choice of $T$ corresponds to compressed sensing measurements \cite{FoRa13}.

In the experiments, we consider 30 problems of this type with $u^\dag$ randomly generated, $\inf_{i \in I} | u^\dag| > 1.5$  and $\#\mathop{{\rm supp}}(u^\dag) = 7,$ and $v$ is a random vector whose components are uniformly distributed on $[-1,1]$, and normalized such that $\|v \|_\infty =0.3.$ We also consider the Gaussian matrices of the size $m = 50$, $N = 100.$ 

In order to approximate minimizers of the multi-penalty (\ref{eq:multi}) and the corresponding single-penalty functional  we use the iterative soft-thresholding algorithm \cite{Fornasier07}.
The regularization parameters $\alpha$ and $\beta$ were chosen from the grid $Q_{\alpha_0}^k \times Q_{\beta_0}^k$, where $Q_{\alpha_0}^k:=\{\alpha= \alpha_i = \alpha_0 k^i\, ,\alpha_0 = 0.0002,k=1.25,i=0,\ldots,50\}$, and $Q_{\beta_0}^k:=\{\beta = \beta_i = \beta_0 k^i,\beta_0 = 0.01,q=1.15,i=0,\ldots,30\}$. For all possible combinations of  $(\alpha,\beta)$  we run the iterative soft-thresholding algorithm with fifty inner loop iterations and starting values $u^{(0)}=v^{(0)}=0$. 

In order to assess the obtained results, we compare the performance of
the considered regularization schemes. We measure the approximation
error (AE) by $\| u - u^\dag \|_{2}$, as well as by the number of
elements in the symmetric difference (SD) $\#(\mathop{{\rm supp}}(u)
\Delta \mathop{{\rm supp}}(u^\dag))$. The SD is defined as follows: $i
\in \mathop{{\rm supp}}(u) \Delta \mathop{{\rm supp}}(u^\dag)$ if and
only if  either $i \notin \mathop{{\rm supp}}(u)$ and
$i\in\mathop{{\rm supp}}(u^\dag)$ or $i \in\mathop{{\rm supp}}(u)$ and
$i \notin\mathop{{\rm supp}}(u^\dagger)$.

For each problem we compute the best multi-penalty solution $u^\dag =
u^\dag(\alpha,\beta)$ meaning that no  other pairs $(\alpha,\beta)\in
Q_{\alpha_0}^k\times Q_{\beta_0}^k$ can improve the accuracy of the
algorithm.  
Simultaneously, for each problem from our data set we compute the best
mono-penalty solution $u^\dag = u^\dag(\alpha)$. 
Then,  we compute the mean value of the AE and SD. The respective
results are shown in table~\ref{tb:mp_sp}.
Additionally, figure~\ref{fi:comp} shows an example of the typical
results obtained for single- and for multi-penalty regularization.

\begin{table}[h]
  \begin{tabular}{rr|rrrr}
    & & $AE$ & $SD$ & $\alpha$ & $\beta$ \hfil \\
    \hline
    & minimum & 5.00  & 3 & 4.59 &  \\
    $SP$ & mean &  11.21 & 6 &  5.74&  \\
    & maximum &  13.74 & 8 & 11.21 &  \\
    \hline
    & minimum & 1.05  & 1 & 3.67 & 0.76  \\
    $MP$ & mean & 5.92 & 3 & 5.74 & 7.09  \\
    & maximum & 8.55 & 5 & 8.97& 9.42  \\
  \end{tabular}
  \caption{\label{tb:mp_sp}
   For 30 problems for the solution of the single-penalty (upper panel) as well as multi-penalty regularization (lower panel) the minimum / maximum AE, SD and optimal values of the regularization parameters are provided. The mean values are also provided.
}
\end{table}

\begin{figure}[h]

   \includegraphics[width=0.95\textwidth]{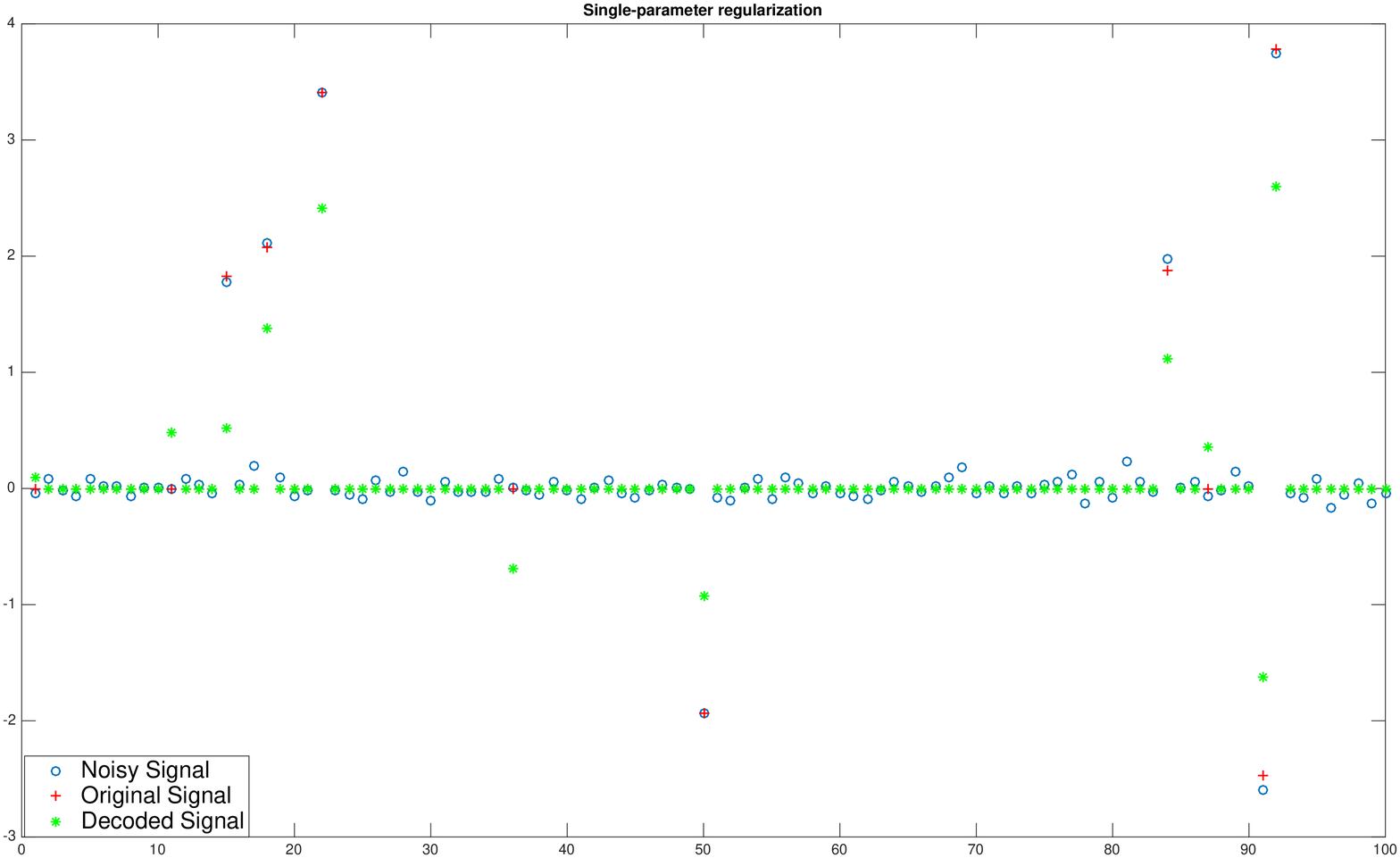}
    \includegraphics[width=0.95\textwidth]{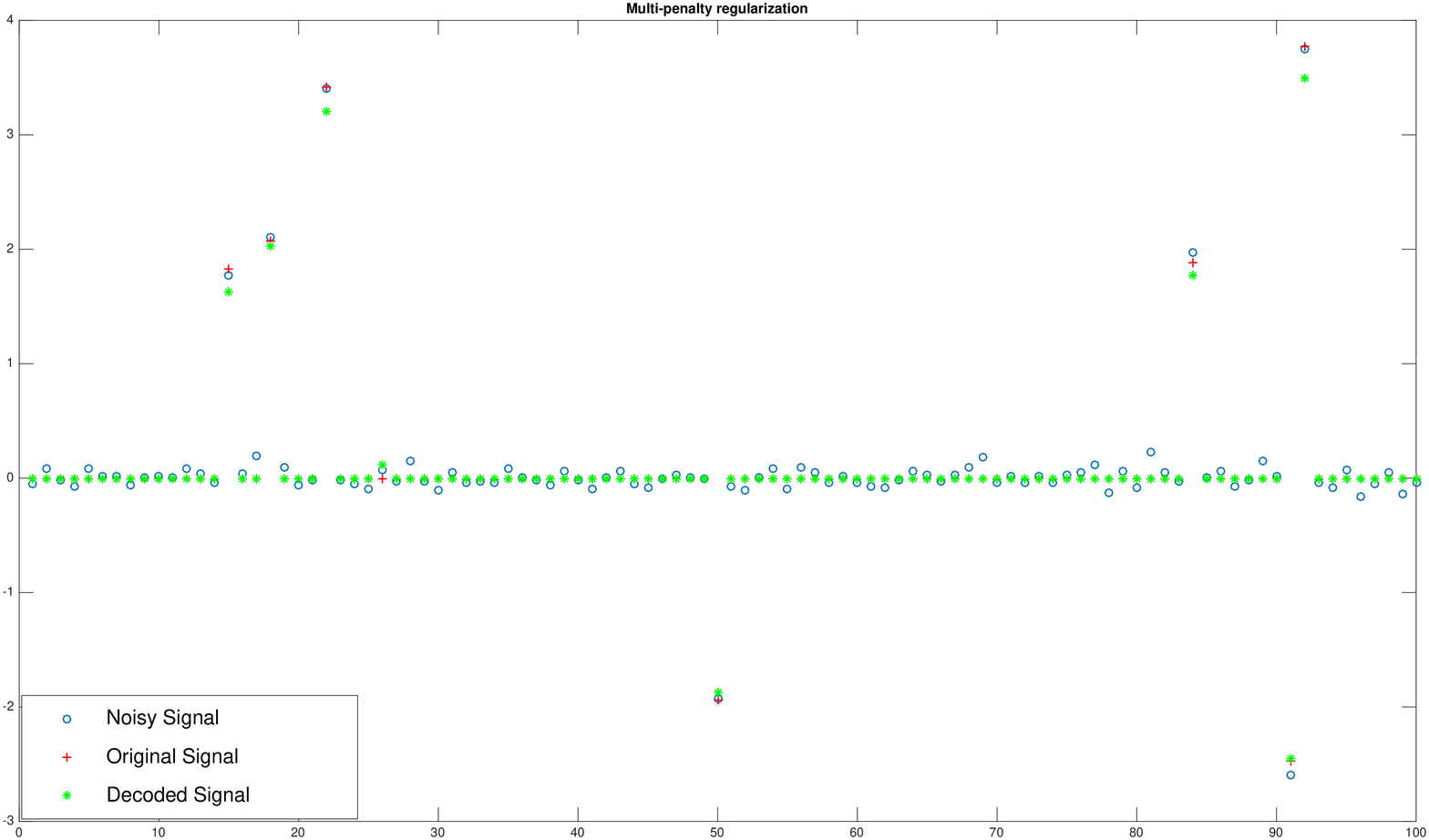}\\
  \caption{\label{fi:comp}
 The figure reports the results of two different decoding procedures
 of the same problem, where the circles represent the noisy signal and
 the crosses represent the original signal. 
 \emph{Upper figure:} results with single-penalty regularization.
 \emph{Lower figure:} results with multy-penalty regularization.
 Note that multi-penalty regularization allows for a better
 reconstruction of the support of the true signal $u^\dagger$.
  } 
\end{figure}

\end{document}